\newtheorem{theorem}{Theorem}
\newtheorem*{theorem*}{Theorem}
\newtheorem{lemma}[theorem]{Lemma}
\newtheorem{corollary}[theorem]{Corollary}
\theoremstyle{definition}
\newtheorem{example}[theorem]{Example}
\begin{document}
%
\title{Generators of Aggregation Functions and Fuzzy Connectives}
%
%
%

\author{Radom\'ir~Hala\v{s}, 
        Radko~Mesiar,
        and~Jozef~P\'ocs 
\thanks{R. Hala\v{s} is with the Department of Algebra and Geometry, Faculty of Science, Palack\'y University Olomouc, 17. listopadu 12, 771 46 Olomouc, Czech Republic, e-mail: radomir.halas@upol.cz}%
\thanks{R. Mesiar is with the Department of Mathematics and Descriptive Geometry, Faculty of Civil Engineering, Slovak University of Technology in Bratislava, Radlinsk\'eho 11, 810 05 Bratislava 1, Slovakia and University of Ostrava, Institute for Research and Applications of Fuzzy Modeling, NSC Centre of Excellence IT4Innovations, 30. dubna 22, 701 03 Ostrava, Czech Republic, e-mail: radko.mesiar@stuba.sk}
\thanks{J. P\'ocs is with the Department of Algebra and Geometry, Faculty of Science, Palack\'y University Olomouc, 17. listopadu 12, 771 46 Olomouc, Czech Republic and Mathematical Institute, Slovak Academy of Sciences, Gre\v s\'akova 6, 040 01 Ko\v sice, Slovakia, e-mail: pocs@saske.sk}
\thanks{Manuscript received April 19, 2005; revised August 26, 2015.}
\thanks{Preprint of an article published by IEEE Transactions on Fuzzy Systems 24 (2016), 1690-1694. It is available online at: \newline ieeexplore.ieee.org/document/7437423}}

%
%

\markboth{Journal of \LaTeX\ Class Files,~Vol.~14, No.~8, August~2015}%
{Hala\v{s} \MakeLowercase{\textit{et al.}}: Generators of Aggregation Functions and Fuzzy Connectives}
%



\maketitle

\begin{abstract}
We show that the class of all aggregation functions on $[0,1]$ can be generated
as a composition of infinitary sup-operation $\bigvee$ acting on sets with cardinality
not exceeding $\mathfrak{c}$, $b$-medians $\mathsf{Med}_b$, $b\in[0,1[$, and unary aggregation functions $1_{]0,1]}$ and $1_{[a,1]}$, $a\in ]0,1]$.
Moreover, we show that we cannot relax the cardinality of argument sets for suprema to be countable, thus showing a kind of minimality of the introduced generating set. 
As a by product, generating sets for fuzzy connectives, such as fuzzy unions, fuzzy intersections and  fuzzy implications are obtained, too.
\end{abstract}

\begin{IEEEkeywords}
aggregation functions, $b$-median, generating set.
\end{IEEEkeywords}

%
\IEEEpeerreviewmaketitle

\section{Introduction}

Aggregation of finitely many inputs into one representative output value has a long history, though an independent theory of aggregation was established only
recently. For more details, including several historical comments, see \cite{BPC,Calvo_2002,Grabisch_et_al_2009}. Aggregation operators, particularly those representing logical connectives, are widely used in connection with various types of fuzzy sets and their applications, cf. \cite{T2,T3,T4}. In this paper we will deal with aggregation on the unit real interval $[0,1]$ only, though our results can be easily extended to an arbitrary interval $[a,b]\subseteq [-\infty,\infty]$. For a positive integer $n \in \mathbb{N}$, an $n$-ary aggregation function on $[0,1]$ is
a function $f\colon [0,1]^n\to [0,1]$ which is increasing (not necessarily in the strict sense) in each coordinate, and satisfies the boundary conditions 
$f(0,\dots,0) = 0$ and $f(1,\dots,1) = 1$. The symbol $\mathsf{Agg}^n$ denotes the set of all $n$-ary aggregation functions on $[0,1]$, and we put $\mathsf{Agg}=\bigcup_{n\in\mathbb{N}}\mathsf{Agg}^n$. Note that we relax a severe constraint $\mathsf{Agg}^1 = \{\mathrm{id}_{[0,1]}\}$ considered in some sources such as \cite{BPC,Calvo_2002,Grabisch_et_al_2009}.

One of the central problems connected with aggregation functions is how can they be constructed. We can mention several construction methods like transformed aggregation, composed aggregation, weighted aggregation, forming ordinal sums etc., for details we refer to the monograph \cite{Grabisch_et_al_2009}. Each of the above mentioned methods typically relies on a very specific approach and the methods usually have a quite different issue. For example, there is a group of methods characterized by the property "from simple to complex".

To understand better some classes of functions, and having tools for construction of such functions, we often look for a generating set of simple functions, whose compositions allow to construct any function from the considered class. As a typical example from the very basic course of algebra, we can mention that the set $P_n$ of all $n$-ary permutations (recall its cardinality $n!$) has, for any $n>2$, a generating set $G_n = \{(2, 1, 3, \dots , n ),(2, 3, \dots ,n ,1)\}$, consisting of the cyclic permutations, with cardinality $2$. For more details see (\cite{Rotman}, Ex. 2.9, p. 24). Clearly, no single permutation can generate $P_n$ if $n > 2$, since in this case $P_n$ does not form a cyclic group. Not going into details, we recall some examples from the aggregation area:
\begin{itemize}
\item[--] strict triangular norms \cite{Alsina_et_al_2006,Klement_et_al_2000,Schweizer-Sklar_1963} of any dimension are generated by automorphisms $\phi\colon[0,1]\to[0,1]$ and the standard product;

\item[--] generated overlap functions \cite{N1} of any dimension are generated by couples of automorphisms $\phi,\eta\colon[0,1]\to[0,1]$ and the standard product;

\item[--] symmetric bisymmetric cancellative aggregation functions are generated by automorphisms $\phi\colon[0,1]\to[0,1]$ and the standard sum, see \cite{Aczel_1966,Grabisch_et_al_2009};

\item[--] congruence preserving aggregation functions on $[0,1]$, see \cite{HMP} (i.e., Sugeno integrals on finite spaces), are generated by projections,
constant functions and binary operations $\vee$ and $\wedge$ \cite{Sug74,Grabisch_et_al_2009,HMP}.
\end{itemize}

As we have mentioned above, one of the central construction methods of aggregation functions on $[0,1]$ is so-called composed aggregation, which is based on the standard composition of real functions, see \cite{Grabisch_et_al_2009}. 
The main goal of this note is to show the following:
\begin{itemize}
\item[--] composed aggregation represents a uniform construction method for the whole set $\mathsf{Agg}$ of aggregation functions on $[0,1]$ in the sense that any aggregation function can be generated as a composition of infinitary sup-operation $\bigvee$ acting on sets with cardinality not exceeding $\mathfrak{c}$, $b$-medians $\mathsf{Med}_b$, $b\in[0,1[$, and unary aggregation functions $1_{]0,1]}$ and $1_{[a,1]}$, $a\in]0,1]$.
\item[--] we cannot relax the cardinality of argument sets for suprema to be countable, in case that the cardinality of a generating set does not exceed the continuum.
\end{itemize}

Our results have a big impact to the basics of the aggregation theory, stressing the important role of $b$-medians, and bringing a representation of aggregation functions on $[0,1]$ of any arity by means of
members of the later introduced generating set. Our approach considered in the next section can be seen as a disjunctive one, considering the operation $\bigvee$ for any input set with cardinality not exceeding $\mathfrak{c}$. Based on this result, we introduce also generating sets for fuzzy connectives, such as fuzzy unions (disjunctions), fuzzy intersections (conjunctions) and  fuzzy implications. Note that
by duality, it would be possible to develop a conjunctive approach. This issue is shortly discussed in concluding remarks.

Observe also that our approach is rather similar to the view on the fuzzy sets as nested systems of sets (i.e., alpha-cuts representation). Moreover, the considered basic aggregation functions ($b$-medians and $0,1$-valued unary aggregation functions) can be seen as simple cells allowing to construct any of aggregation functions. These functions also allow to construct step-wise approximations of aggregation functions which are easy for the further processing. A similar situation concerns so-called memristors and their applications in computer logic and related domains. Let us notice that the memristor (memory resistor) was a term coined in 1971 by circuit theorist Leon Chua as a missing non-linear passive two-terminal electrical component relating electric charge and magnetic flux linkage, see \cite{Chua}. Nowadays, the ideas of Chua are broadly used as a theoretical background for many practical applications.

\section{Generating set of the class $\mathsf{Agg}$}

The construction method of composed aggregation is based on a function composition. We recall its formal definition. 
Let $A$ be a set and $n\in\mathbb N$ be a positive integer. For any $i< n$, the {\it $i$-th $n$-ary projection} 
is for all $x_1,\dots,x_n\in A$ defined by 
$$p_i^n(x_0,\dots,x_{n-1}):=x_i.$$
Composition forms from one $k$-ary operation $f:A^k\to A$ and $k$ $n$-ary operations $g_1,\dots,g_k:A^n\to A$, 
an $n$-ary operation $f(g_1,\dots,g_k):A^n\to A$ defined by
\begin{equation}\label{eq2}
f\big(g_1,\dots,g_k\big)(\mathbf{x}):=f\big(g_1(\mathbf{x}),\dots,g_k(\mathbf{x})\big),
\end{equation}
for all $\mathbf{x}\in A^n$. 

Let us note that the composition of infinitary functions, i.e., functions with infinitely many arguments, can be defined in a similar way. A set of operations on a set $A$ which contains all the projection operations on $A$ and which is closed under the composition
is called a {\it clone}. For an overview of clone theory we refer to the monograph \cite{Lau}. Moreover, the notion of a clone generalizes that of a monoid 
in a sense that it can be viewed as a set of selfmaps of 
a set $A$ that is closed under composition and containing the identical mapping. Indeed, for $k=n=1$, composition is a usual product of selfmaps.

As intersection of clones is again a clone, for any set $F$ of functions on $A$ we can consider the least clone $[F]$ containing the set $F$. We call 
$F$ a {\it generating set} of a clone if $[F]$ coincides with it.

In order to generate the set (clone) $\mathsf{Agg}$ of aggregation functions, we use the following unary and binary functions: 

For any $a\in [0,1]$ we define $\chi_a\colon [0,1]\to [0,1]$ by
\begin{equation}\label{e1}
\chi_a(x)=
\begin{cases}&1, \text{ if }x\geq a,\; x\neq 0; \\
             &0, \text{ if }x<a \text{ or }x=0.\\   
\end{cases}
\end{equation} 

Obviously, $\chi_a$ is an aggregation function for all $a\in [0,1]$. Moreover, for $a\neq 0$ it represents a characteristic function of the closed interval $[a,1]$, while $\chi_0$ is the characteristic function of the half-open interval $]0,1]$. 
Let us note that in a standard literature (see e.g. \cite{Grabisch_et_al_2009}) for these functions another notation is used, namely  $1_{]0,1]}$ for $\chi_0$ and $1_{[a,1]}$ for $\chi_a$, $a\in ]0,1]$. In order to be consistent with our recent paper \cite{HP}, we shall follow the first one.

Further, for any $b\in [0,1]$ define the $b$-median, see \cite{Fung_Fu_1975,Fodor_1996} and \cite{T1}, $\mathsf{Med}_b\colon [0,1]^2\to [0,1]$ by 
\begin{equation}\label{e2}
\mathsf{Med}_b(x,y)=\mathsf{Med}(x,y,b).
\end{equation}

Note, that it can be easily seen that $\mathsf{Med}_0(x,y)= x\wedge y$ and $\mathsf{Med}_1(x,y)= x\vee y$. Consequently, the binary operations $\wedge$ as well as $\mathsf{Med}_1$ need not be used in the following construction. However, in order to simplify notations we use both operations.

For each $n\in\mathbb{N}$ and $b\in[0,1]$ we use the following functions $G^n_b\colon [0,1]^n\to [0,1]$, defined by induction as follows:

\begin{itemize}
\item $G^1_b(x_0)=\mathsf{Med}_b\big(\chi_0(x_0),\chi_1(x_0)\big)$;
\item $G^2_b(x_0,x_1)=\mathsf{Med}_b\big(\chi_0(x_0\vee x_1),\chi_1(x_0\wedge x_1)\big)$;
\item $G^{n+1}_b(x_0,\dots,x_n)=G^2_b\big(G^n_b(x_0,\dots,x_{n-1}),x_n\big)$ provided $n\geq 2$.
\end{itemize}

Obviously, for each $n\in \mathbb{N}$ the function $G^n_b$ is a composition of binary functions $\vee$, $\wedge$, $\mathsf{Med}_b$, $b\in [0,1]$ and unary functions $\chi_a$, $a\in[0,1]$. In the following lemma we show that $G^n_b$ is the $n$-ary constant aggregation function, i.e., the set of all constant aggregation functions can be generated by the mentioned set of binary and unary aggregation functions. 

\begin{lemma}\label{lemn2}
For any $n\in\mathbb{N}$ and $b\in [0,1]$, $G^n_b$ is an aggregation function and 
$$
G^n_b(\mathbf{x})=
\begin{cases}&0, \text{ if } \mathbf{x}=(0,\dots,0); \\
             &1, \text{ if } \mathbf{x}=(1,\dots,1); \\  
             &b, \text{ otherwise}. 
\end{cases}
$$
\end{lemma}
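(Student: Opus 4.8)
The plan is to prove the value formula by induction on $n$ from the explicit definitions of $\chi_0$, $\chi_1$ and the median, and then to deduce that $G^n_b$ is an aggregation function. First I would settle the base cases $n=1,2$ by direct evaluation. For $n=1$ I would note that $\chi_0(x_0)=1$ exactly when $x_0>0$ while $\chi_1(x_0)=1$ exactly when $x_0=1$; feeding the three possibilities $x_0=0$, $x_0=1$, $0<x_0<1$ into $\mathsf{Med}_b\big(\chi_0(x_0),\chi_1(x_0)\big)$ yields $\mathsf{Med}(0,0,b)=0$, $\mathsf{Med}(1,1,b)=1$ and $\mathsf{Med}(1,0,b)=b$, respectively. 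For $n=2$ I would argue similarly: $\chi_0(x_0\vee x_1)=1$ unless $x_0=x_1=0$, and $\chi_1(x_0\wedge x_1)=1$ only when $x_0=x_1=1$, so the three cases of the statement reduce to $\mathsf{Med}_b(0,0)=0$, $\mathsf{Med}_b(1,1)=1$ and $\mathsf{Med}_b(1,0)=b$.

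For the inductive step ($n\geq 2$) I would write $G^{n+1}_b(\mathbf{x})=G^2_b(u,x_n)$ with $u=G^n_b(x_0,\dots,x_{n-1})$, and apply the already-proved $n=2$ characterization to the pair $(u,x_n)$ rather than re-expanding medians. If $\mathbf{x}=(0,\dots,0)$ then $u=0$ and $x_n=0$, so $G^2_b(0,0)=0$; if $\mathbf{x}=(1,\dots,1)$ then $u=1$ and $x_n=1$, so $G^2_b(1,1)=1$. In the remaining case the induction hypothesis gives $u\in\{0,b,1\}$, and I would check that $(u,x_n)$ is never $(0,0)$ or $(1,1)$: if the first $n$ coordinates are all $0$ then $u=0$ but $x_n>0$, while if they are all $1$ then $u=1$ but $x_n<1$, and otherwise $u=b$. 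Hence $G^2_b(u,x_n)=b$ in each subcase.

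The step I expect to require the most care is precisely this ``otherwise'' case when $b\in\{0,1\}$, since then the intermediate value $u=b$ coincides with a boundary value and the naive reading ``$u=b$, hence output $b$'' must be justified through the $G^2_b$ formula rather than assumed. Invoking the $n=2$ characterization uniformly absorbs these degenerate subcases cleanly: for $b=0$ the pair $(0,x_n)$ outputs $0=b$ whether or not $x_n=0$, and symmetrically for $b=1$.

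Finally I would verify that $G^n_b\in\mathsf{Agg}^n$. The boundary conditions $G^n_b(0,\dots,0)=0$ and $G^n_b(1,\dots,1)=1$ are immediate from the value formula. Monotonicity in each coordinate I would obtain structurally rather than from the formula: $\chi_a$, $\vee$, $\wedge$ and $\mathsf{Med}_b$ are all nondecreasing in each of their arguments, and $G^n_b$ is by construction a composition of such maps, so it inherits monotonicity. This completes the verification that $G^n_b$ is an aggregation function taking the claimed values.
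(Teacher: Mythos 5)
Your proof is correct and follows essentially the same route as the paper: base cases $n=1,2$ by direct evaluation of $\chi_0$, $\chi_1$ and the median, then induction reducing $G^{n+1}_b$ to the already-established $n=2$ characterization applied to the pair $\big(G^n_b(x_0,\dots,x_{n-1}),x_n\big)$. The only cosmetic differences are that the paper splits the ``otherwise'' case on the value of $x_n$ whereas you split on the first $n$ coordinates, and that you are more explicit than the paper about the degenerate subcases $b\in\{0,1\}$ (where your initial claim that $(u,x_n)$ avoids $(0,0)$ and $(1,1)$ can fail, but the conclusion $G^2_b(u,x_n)=b$ still holds, as you note) and about monotonicity.
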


\begin{proof}
Let $b\in[0,1]$ be an arbitrary element. For $n=1$ it can be easily seen that $G^1_b(0)=\mathsf{Med}_b(0,0)=0$, $G^1_b(1)=\mathsf{Med}_b(1,1)=1$, while if $x\in]0,1[$ then $G^1_b(x)=\mathsf{Med}_b(1,0)=\mathsf{Med}(1,0,b)=b$. Similarly, for $n=2$ we obtain $G^2_b(0,0)=0$ and $G^2_b(1,1)=1$. If $(0,0)\neq\mathbf{x}\neq (1,1)$ then $x_0\vee x_1>0$ and $x_0\wedge x_1<1$. Consequently, $G^2_b(x_0,x_1)=\mathsf{Med}_b(1,0)=b$.

Further, suppose that for $n\geq 2$, the assertion of the lemma is valid. If $\mathbf{x}=(0,\dots,0)\in [0,1]^{n+1}$ is the $(n+1)$-ary vector, then $G^{n+1}_b(\mathbf{x})=G^2_b(0,0)=0$. Similarly for $\mathbf{x}=(1,\dots,1)$ we obtain $G^{n+1}_b(\mathbf{x})=G^2_b(1,1)=1$. Finally, assume that $(0,\dots,0)\neq\mathbf{x}\neq(1,\dots,1)$. If $0<x_n<1$, then $G^{n+1}_b(\mathbf{x})=G^2_b\big(G^n_b(x_0,\dots,x_{n-1}),x_n\big)=b$, since for $G^2_b$ the assertion is valid. If $x_n=0$, then there is an index $i<n$ such that $x_i\neq 0$. According to the induction assumption $G^n_b(x_0,\dots,x_{n-1})$ is equal to $1$ or $b$. Hence, we obtain $G^{n+1}_b(\mathbf{x})=G^2_b(1,0)=b$ or $G^{n+1}_b(\mathbf{x})=G^2_b(b,0)=b$. If $x_n=1$ we obtain $G^{n+1}_b(\mathbf{x})=b$ as well. 
\end{proof}

Denote by $[0,1]^n_{*}$ the set of all elements $\mathbf{a}\in[0,1]^n$ satisfying $(0,\dots,0)\neq\mathbf{a}\neq(1,\dots,1)$.
For any $\mathbf{a}=(a_0,\dots,a_{n-1})\in [0,1]^n$, denote by $J_{\mathbf{a}}=\{0\leq i\leq n-1: a_i\neq 0\}$ the set of indices with non-zero values, i.e., $J_{\mathbf{a}}$ represents the support of the vector $\mathbf{a}$.

Let $n\in\mathbb{N}$ be a positive integer and $f\colon [0,1]^n\to [0,1]$ be an aggregation function. For any $\mathbf{a}\in [0,1]^n_{*}$ we define the function $h_{\mathbf{a}}^f\colon [0,1]^n\to [0,1]$ by putting for $\mathbf{x}=(x_0,\dots,x_{n-1})$

\begin{equation}\label{e3}
h_{\mathbf{a}}^f(\mathbf{x})=G^n_{f(\mathbf{a})}(\mathbf{x})\; \wedge\; \bigwedge_{i\in J_{\mathbf{a}}}\chi_{a_i}(x_i).
\end{equation}

Note that the condition $\mathbf{a}\in [0,1]^n_{*}$ implies $J_{\mathbf{a}}\neq\emptyset$. 
Obviously, the function $h_{\mathbf{a}}^f$ has the same arity as the function $f$. However, from \eqref{e3} it can be easily seen that the function $h_{\mathbf{a}}^f$ can be generated by the previously mentioned set of binary and unary aggregation functions.
The following lemma characterizes the values attained by the particular functions $h_{\mathbf{a}}^f$. 

\begin{lemma}\label{lem1}
Given an aggregation function $f\colon [0,1]^n\to [0,1]$ and $\mathbf{a}\in [0,1]^n_{*}$, $h_{\mathbf{a}}^f$ is an aggregation function such that
for all $\mathbf{x}=(x_0,\dots,x_{n-1})\in [0,1]^n,$

\begin{equation}\label{e4}
h_{\mathbf{a}}^f(\mathbf{x})=\begin{cases}&1, \text{ if }\ \mathbf{x}=(1,\dots,1);\\
                                        &f(\mathbf{a}),\text{ if }\ \mathbf{x}\geq \mathbf{a},\ \mathbf{x}\neq(1,\dots,1);\\   
                                        &0, \text{ if }\ \mathbf{x}\ngeq\mathbf{a}.    
              \end{cases}
\end{equation}
\end{lemma}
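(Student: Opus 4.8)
The plan is to verify the three-case formula in \eqref{e4} by directly unwinding the definition \eqref{e3} of $h_{\mathbf{a}}^f$, namely
$$h_{\mathbf{a}}^f(\mathbf{x})=G^n_{f(\mathbf{a})}(\mathbf{x})\;\wedge\;\bigwedge_{i\in J_{\mathbf{a}}}\chi_{a_i}(x_i),$$
and analysing the two factors separately. The first factor is fully understood by Lemma~\ref{lemn2}: with $b=f(\mathbf{a})$ it equals $0$ at the all-zeros vector, $1$ at the all-ones vector, and the constant $f(\mathbf{a})$ everywhere else. The second factor is a meet of characteristic functions $\chi_{a_i}$ taken over the support $J_{\mathbf{a}}$, and by the definition \eqref{e1} of $\chi_{a_i}$ each such factor equals $1$ precisely when $x_i\geq a_i$ (using $a_i\neq 0$ for $i\in J_{\mathbf{a}}$, so the degenerate $x_i=0$ case is already excluded by $x_i\geq a_i>0$) and $0$ otherwise. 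Hence the whole meet $\bigwedge_{i\in J_{\mathbf{a}}}\chi_{a_i}(x_i)$ is $1$ exactly when $x_i\geq a_i$ for every $i\in J_{\mathbf{a}}$, which is the same as the condition $\mathbf{x}\geq\mathbf{a}$ since for $i\notin J_{\mathbf{a}}$ we have $a_i=0\leq x_i$ automatically; otherwise the meet is $0$.

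Next I would combine the two factors case by case. When $\mathbf{x}\ngeq\mathbf{a}$, the second factor is $0$, so the meet with anything gives $h_{\mathbf{a}}^f(\mathbf{x})=0$, matching the third line. When $\mathbf{x}\geq\mathbf{a}$ but $\mathbf{x}\neq(1,\dots,1)$, the second factor is $1$, so $h_{\mathbf{a}}^f(\mathbf{x})$ equals the first factor $G^n_{f(\mathbf{a})}(\mathbf{x})$; here I must argue that this equals $f(\mathbf{a})$. Since $\mathbf{a}\in[0,1]^n_{*}$ forces $\mathbf{a}\neq(0,\dots,0)$, the inequality $\mathbf{x}\geq\mathbf{a}$ gives $\mathbf{x}\neq(0,\dots,0)$, and we have assumed $\mathbf{x}\neq(1,\dots,1)$, so Lemma~\ref{lemn2} yields $G^n_{f(\mathbf{a})}(\mathbf{x})=f(\mathbf{a})$ as required. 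Finally, at $\mathbf{x}=(1,\dots,1)$ both factors are $1$: the first by Lemma~\ref{lemn2} and the second because $1\geq a_i$ always, so the meet is $1$, giving the first line of \eqref{e4}.

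It then remains to check that $h_{\mathbf{a}}^f$ is indeed an aggregation function, i.e., that it is monotone in each coordinate and satisfies the boundary conditions. The boundary conditions are immediate from the formula just proved: $h_{\mathbf{a}}^f(1,\dots,1)=1$, and $h_{\mathbf{a}}^f(0,\dots,0)=0$ because $(0,\dots,0)\ngeq\mathbf{a}$ (as $\mathbf{a}\neq(0,\dots,0)$). Monotonicity follows since $h_{\mathbf{a}}^f$ is built by composition from the monotone building blocks $G^n_{f(\mathbf{a})}$ (an aggregation function by Lemma~\ref{lemn2}), the $\chi_{a_i}$ (each increasing), and the binary meet $\wedge$, all of which preserve the coordinatewise order; alternatively, monotonicity can be read off directly from \eqref{e4}, since the value jumps upward as $\mathbf{x}$ increases through the thresholds $\mathbf{x}\geq\mathbf{a}$ and $\mathbf{x}=(1,\dots,1)$. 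I expect no serious obstacle here: the only point demanding a little care is keeping the support condition $J_{\mathbf{a}}\neq\emptyset$ and the relation between the pointwise threshold conditions $x_i\geq a_i$ and the vector inequality $\mathbf{x}\geq\mathbf{a}$ straight, so that the indices outside the support (where $a_i=0$) are correctly seen to impose no constraint.
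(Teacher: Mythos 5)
Your proposal is correct and follows essentially the same route as the paper's own proof: unwind definition \eqref{e3}, use Lemma~\ref{lemn2} for the factor $G^n_{f(\mathbf{a})}$, and observe that the meet $\bigwedge_{i\in J_{\mathbf{a}}}\chi_{a_i}(x_i)$ detects exactly the condition $\mathbf{x}\geq\mathbf{a}$. You are in fact slightly more explicit than the paper on two minor points (that $\mathbf{x}\geq\mathbf{a}$ with $\mathbf{a}\neq(0,\dots,0)$ rules out $\mathbf{x}=(0,\dots,0)$, and the separate treatment of $\mathbf{x}=(1,\dots,1)$), which is fine.
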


\begin{proof}
It can be easily seen that the function $h_{\mathbf{a}}^f$ is nondecreasing in each coordinate and it fulfills the boundary conditions, i.e., $h_{\mathbf{a}}^f(0,\dots,0)=0$ as well as $h_{\mathbf{a}}^f(1,\dots,1)=1$.

Further, assume that $(1,\dots,1)\neq \mathbf{x}\geq\mathbf{a}$. Recall, that $\mathbf{x}\geq\mathbf{a}$ if and only if $x_i\geq a_i$ for all $i\in J_{\mathbf{a}}$. In this case, $\chi_{a_i}(x_i)=1$ for each index $i\in J_{\mathbf{a}}$. Also due to Lemma \ref{lemn2}, the condition $(1,\dots,1)\neq \mathbf{x}\neq (0,\dots,0)$ implies $G^n_{f(\mathbf{a})}(x_0,\dots,x_{n-1})=f(\mathbf{a})$. Hence we obtain 
$$ h_{\mathbf{a}}^f(\mathbf{x})=G^n_{f(\mathbf{a})}(\mathbf{x})\; \wedge\; \bigwedge_{i\in J_{\mathbf{a}}}\chi_{a_i}(x_i)=f(\mathbf{a})\wedge 1=f(\mathbf{a}).$$

If $\mathbf{x}\ngeq\mathbf{a}$, then $x_i\ngeq a_i$ for some index $i\in J_{\mathbf{a}}$. Consequently $\chi_{a_i}(x_i)=0$, which yields 
$$ h_{\mathbf{a}}^f(\mathbf{x})=G^n_{f(\mathbf{a})}(\mathbf{x}) \; \wedge\; \bigwedge_{i\in J_{\mathbf{a}}}\chi_{a_i}(x_i)=G^n_{f(\mathbf{a})}(\mathbf{x})\wedge 0= 0.$$

\end{proof}

\begin{lemma}\label{lem2}
Let $f\colon [0,1]^n \to [0,1]$ be an aggregation function and for all $\mathbf{a}\in [0,1]^n_{*}$, $h_{\mathbf{a}}^f$ be the function defined by formula $(\ref{e3})$. Then 
\begin{equation}\label{eqn1}
f(\mathbf{x})=\bigvee_{\mathbf{a}\in [0,1]^n_{*}}h_{\mathbf{a}}^f(\mathbf{x})
\end{equation}
for all $\mathbf{x}\in [0,1]^n$. 
\end{lemma}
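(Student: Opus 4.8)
The plan is to verify the identity \eqref{eqn1} pointwise, splitting the argument $\mathbf{x}$ into three cases according to whether it is the bottom vector, the top vector, or an interior point of $[0,1]^n_{*}$, and to rely throughout on the explicit description of $h_{\mathbf{a}}^f$ furnished by Lemma \ref{lem1} together with the monotonicity of $f$.

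First I would dispose of the two boundary vectors. If $\mathbf{x}=(1,\dots,1)$, then Lemma \ref{lem1} gives $h_{\mathbf{a}}^f(\mathbf{x})=1$ for every $\mathbf{a}\in[0,1]^n_{*}$, so the supremum equals $1=f(1,\dots,1)$. If $\mathbf{x}=(0,\dots,0)$, then since $\mathbf{a}\in[0,1]^n_{*}$ forces $J_{\mathbf{a}}\neq\emptyset$, there is an index $i$ with $a_i>0=x_i$, whence $\mathbf{x}\ngeq\mathbf{a}$ and $h_{\mathbf{a}}^f(\mathbf{x})=0$ for every such $\mathbf{a}$; as $[0,1]^n_{*}$ is nonempty, the supremum is $0=f(0,\dots,0)$.

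The main case is $\mathbf{x}\in[0,1]^n_{*}$, where I would establish the two inequalities separately. For the lower bound $\bigvee_{\mathbf{a}}h_{\mathbf{a}}^f(\mathbf{x})\geq f(\mathbf{x})$, the key observation is that $\mathbf{x}$ itself is an admissible index: taking $\mathbf{a}=\mathbf{x}$ we have $\mathbf{x}\geq\mathbf{a}$ and $\mathbf{x}\neq(1,\dots,1)$, so the middle clause of \eqref{e4} yields $h_{\mathbf{x}}^f(\mathbf{x})=f(\mathbf{x})$, and this single term already attains the value $f(\mathbf{x})$. For the upper bound I would check $h_{\mathbf{a}}^f(\mathbf{x})\leq f(\mathbf{x})$ for every $\mathbf{a}\in[0,1]^n_{*}$: the clause $h_{\mathbf{a}}^f(\mathbf{x})=1$ cannot occur here (it requires $\mathbf{x}=(1,\dots,1)$), the clause $h_{\mathbf{a}}^f(\mathbf{x})=0$ is trivially dominated, and in the remaining clause $\mathbf{x}\geq\mathbf{a}$ gives $f(\mathbf{a})\leq f(\mathbf{x})$ by monotonicity of $f$. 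Combining the two inequalities closes this case.

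I do not expect a genuine obstacle; the content of the lemma is essentially the observation that the family $\{h_{\mathbf{a}}^f\}$ built in \eqref{e3} realizes a lower ``staircase'' approximation of $f$ indexed by the points $\mathbf{a}$, each step being pinned exactly at the value $f(\mathbf{a})$ on the up-set of $\mathbf{a}$ and vanishing off it. The only points demanding slight care are the two boundary vertices, which lie outside $[0,1]^n_{*}$ and so must be handled by hand, and the verification that monotonicity of $f$ is precisely what prevents the supremum from overshooting $f(\mathbf{x})$.
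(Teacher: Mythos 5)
Your proof is correct and follows essentially the same route as the paper's: evaluate the supremum pointwise using the case description of $h_{\mathbf{a}}^f$ from Lemma \ref{lem1}, note that the term at $\mathbf{a}=\mathbf{x}$ attains $f(\mathbf{x})$, and use monotonicity of $f$ to bound all other terms. Your explicit treatment of the two boundary vertices is slightly more detailed than the paper's (which simply refers back to Lemma \ref{lem1}), but the argument is the same.
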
  

\begin{proof}
With respect to the previous lemma, if $\mathbf{x}=(0,\dots,0)$ or $\mathbf{x}=(1,\dots,1)$, then $\bigvee_{\mathbf{a}\in [0,1]^n_{*}}h_{\mathbf{a}}^f(\mathbf{x})$ gives the corresponding boundary values. Further, let $\mathbf{x}\in [0,1]^n_{*}$ be an $n$-ary vector. Using (\ref{e4}) of Lemma \ref{lem1} we obtain

\begin{equation*}
\begin{split}
 \bigvee_{\mathbf{a}\in [0,1]^n_{*}} h_{\mathbf{a}}^f(\mathbf{x})&=\bigvee_{\substack{\mathbf{a}\in [0,1]^n_{*}\\ \mathbf{a}\leq \mathbf{x}}} h_{\mathbf{a}}^f(\mathbf{x})\ \vee \ \bigvee_{\substack{\mathbf{a}\in [0,1]^n_{*}\\ \mathbf{a}\nleq \mathbf{x}}} h_{\mathbf{a}}^f(\mathbf{x}) =  \\
& \bigvee_{\substack{\mathbf{a}\in [0,1]^n_{*}\\ \mathbf{a}\leq \mathbf{x}}} f(\mathbf{a}) \ \vee \ \bigvee_{\substack{\mathbf{a}\in [0,1]^n_{*}\\ \mathbf{a}\nleq \mathbf{x}}} 0= \bigvee_{\substack{\mathbf{a}\in [0,1]^n_{*}\\ \mathbf{a}\leq \mathbf{x}}} f(\mathbf{a}).
\end{split}
\end{equation*}

Since the function $f$ is nondecreasing and $\mathbf{x}$ is the greatest element of the set $\{\mathbf{a}\in [0,1]^n_{*}: \mathbf{a}\leq x\}$, it follows that $f(\mathbf{a})\leq f(\mathbf{x})$ for all $\mathbf{a}\in [0,1]^n_{*}$, $\mathbf{a}\leq \mathbf{x}$. Consequently 
$$ \bigvee_{\mathbf{a}\in [0,1]^n_{*}} h_{\mathbf{a}}^f(\mathbf{x})= \bigvee_{\substack{\mathbf{a}\in [0,1]^n_{*}\\ \mathbf{a}\leq \mathbf{x}}} f(\mathbf{a})=f(\mathbf{x}),$$ completing the proof.

\end{proof}

As a consequence of this lemma and according to the remark after the definition of $b$-medians \eqref{e2}, we obtain the following:

\begin{theorem}
The set $\mathsf{Agg}$ of all aggregation functions can be generated by the infinitary operation $\bigvee$, by the functions $\chi_a$, $a\in [0,1]$, defined by \eqref{e1} and by the $b$-medians $\mathsf{Med}_b$, $b\in [0,1[$, defined by \eqref{e2}.
\end{theorem}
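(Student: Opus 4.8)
The plan is to assemble the final theorem directly from the three lemmas already established, treating it as a synthesis rather than a fresh argument. The essential observation is that Lemma~\ref{lem2} furnishes, for every arity $n$ and every aggregation function $f\colon[0,1]^n\to[0,1]$, an explicit representation $f=\bigvee_{\mathbf{a}\in[0,1]^n_{*}}h_{\mathbf{a}}^f$, where each $h_{\mathbf{a}}^f$ is built from the functions $\chi_a$ and the $b$-medians $\mathsf{Med}_b$ via composition. So the task reduces to verifying that this representation witnesses membership of an arbitrary $f$ in the clone generated by the proposed set, and that the cardinality of the supremum involved does not exceed $\mathfrak{c}$.

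First I would fix an arbitrary $n\in\mathbb{N}$ and $f\in\mathsf{Agg}^n$, and trace through the construction of each $h_{\mathbf{a}}^f$ to confirm it lies in the clone generated by $\{\bigvee,\,\chi_a\ (a\in[0,1]),\,\mathsf{Med}_b\ (b\in[0,1[)\}$. From definition~\eqref{e3}, $h_{\mathbf{a}}^f$ is the conjunction of $G^n_{f(\mathbf{a})}(\mathbf{x})$ with finitely many terms $\chi_{a_i}(x_i)$; since $G^n_b$ is by construction a composition of $\mathsf{Med}_b$, $\vee$, $\wedge$, and the $\chi_a$, and since both $\wedge=\mathsf{Med}_0$ and $\vee=\mathsf{Med}_1$ are $b$-medians (as noted after \eqref{e2}), every such $h_{\mathbf{a}}^f$ is a composition of members of the generating set. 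Here one must use the remark that $\mathsf{Med}_1$ is expressible through the remaining operations so that it is legitimate to exclude $b=1$ and restrict to $b\in[0,1[$; the operation $\vee$ appearing inside the $G^n_b$ and in the outer supremum is of course the very operation $\bigvee$ we are generating by.

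Next I would handle the aggregation by $\bigvee$. The index set $[0,1]^n_{*}\subseteq[0,1]^n$ has cardinality $\mathfrak{c}$, so the supremum in \eqref{eqn1} ranges over a set of cardinality not exceeding $\mathfrak{c}$, which is precisely the admissible bound for the infinitary sup-operation in the generating set. Applying $\bigvee$ to the family $(h_{\mathbf{a}}^f)_{\mathbf{a}\in[0,1]^n_{*}}$ therefore stays within the clone, and Lemma~\ref{lem2} identifies the result with $f$. Since $n$ and $f$ were arbitrary, every aggregation function lies in the generated clone; the reverse inclusion is immediate because each generator is itself an aggregation function and $\mathsf{Agg}$ is closed under composition, so the two clones coincide.

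I do not expect a serious obstacle, as the heavy lifting is done by the preceding lemmas; the statement is essentially a corollary, as the text itself signals. The one point requiring genuine care is the cardinality bookkeeping for the supremum: one must verify that $\lvert[0,1]^n_{*}\rvert=\mathfrak{c}$ for every finite $n$ (rather than something larger), so that the single infinitary operation $\bigvee$ with argument sets bounded by $\mathfrak{c}$ genuinely suffices to realize \emph{every} $h_{\mathbf{a}}^f$ simultaneously. This is routine, since $\lvert[0,1]^n\rvert=\mathfrak{c}^n=\mathfrak{c}$, but it is the hinge on which the ``not exceeding $\mathfrak{c}$'' claim of the theorem turns, and it foreshadows the subsequent minimality result that countable suprema would not be enough.
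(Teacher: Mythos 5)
Your proposal is correct and follows essentially the same route as the paper, which states the theorem as an immediate consequence of Lemma~\ref{lem2} together with the remark that $\wedge=\mathsf{Med}_0$ and $\vee=\mathsf{Med}_1$ (the latter being subsumed by the infinitary $\bigvee$, justifying the restriction to $b\in[0,1[$). Your explicit checks — that each $h_{\mathbf{a}}^f$ is a composition of the generators and that $\lvert[0,1]^n_{*}\rvert=\mathfrak{c}$ — are exactly the bookkeeping the paper leaves implicit.
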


\begin{example}
Consider the following ternary aggregation function $f(x_0,x_1,x_2)=x_0\cdot x_1 \cdot x_2$. Expressions \eqref{e3} and \eqref{eqn1} yield the following expression for $f(x_0,x_1,x_2)$
\begin{equation*}
\bigvee_{(a_0,a_1,a_2)\in [0,1]^3_*} \!\!\!\Big(G^3_{a_0\cdot a_1\cdot a_2}(x_0,x_1,x_2)\;\wedge \bigwedge_{i\in J(a_0,a_1,a_2)}\!\!\!\chi_{a_i}(x_i)\Big).
\end{equation*}
Using the fact that $a_0\cdot a_1\cdot a_2=1$ if and only if $a_i=1$ for each $i\in\{0,1,2\}$ and due to infinite distributivity this can be further simplified to
\begin{equation*}
\begin{split}
 f(x_0,x_1,x_2)=\bigvee_{u\in]0,1[}&G^3_u(x_0,x_1,x_2)\; \wedge \\
  \bigvee_{\substack{(a_0,a_1,a_2)\in [0,1]^3_*\\ a_0\cdot a_1\cdot a_2=u}}& \big( \chi_{a_0}(x_0)\wedge \chi_{a_1}(x_1)\wedge \chi_{a_2}(x_2)\big).
\end{split}
\end{equation*} 
Hence, in particular for $u=\frac{1}{2}$ we have the following expression
\begin{equation*}
\begin{split}
 \mathsf{Med}_{\frac{1}{2}}\Big( &\chi_0\big(\mathsf{Med}_{\frac{1}{2}}\big( \chi_0(x_0\vee x_1),\chi_1(x_0\wedge x_1)\big)\vee x_2\big), \\
 &\chi_1(\mathsf{Med}_{\frac{1}{2}}\big( \chi_0(x_0\vee x_1),\chi_1(x_0\wedge x_1)\big)\wedge x_2)\Big)\ \wedge \\
&\bigvee_{\substack{(a_0,a_1,a_2)\in [0,1]^3_*\\ a_0\cdot a_1\cdot a_2=\frac{1}{2}}} \big( \chi_{a_0}(x_0)\wedge \chi_{a_1}(x_1)\wedge \chi_{a_2}(x_2)\big). 
\end{split}
\end{equation*} 
\end{example}

{\bf Remark}. Observe that the representation of an aggregation function $f$ based on \eqref{e4} and \eqref{eqn1} can be seen as a counterpart of cut-representation of fuzzy sets. Besides this fact, when restricting in formula (6) the domain for points from $[0,1]^n_{*}$ to ${I_k}^n_{*}= {I_k}^n \setminus \{(0,\dots,0),(1,\dots,1)\}$, where, for an integer $k$, we have 
$I_k = \{\frac{0}{k},\frac{1}{k},\frac{2}{k},\dots,\frac{k}{k}\}$, we obtain a step-wise lower approximation of the considered aggregation function $f$ which, for sufficiently large $k$, can be used for an effective processing of real data by means of $f$.

As we can see, the above described generating set has cardinality of the continuum $\mathfrak{c}=2^{\aleph_0}$. 
Involving the infinitary operation $\bigwedge$, we can lower the cardinality of the generating set. For a real number $a\in[0,1]$ denote by $S_a=\{q\in\mathbb{Q}:q\leq a\}$ the set of all rationals lower or equal than $a$. Note that due to the density of the set $\mathbb{Q}$ in $\mathbb{R}$, it follows that $\bigvee S_a=a$ for each $a\in [0,1]$. Then $\chi_a(x)=\bigwedge_{q\in S_a} \chi_q(x)$ for all $x\in [0,1]$. Indeed, $\bigwedge_{q\in S_a} \chi_q(x)=1$ if and only if $x\geq q$ for all $q\in S_a$, which is equivalent to $x\geq \bigvee S_a=a$. Moreover, for any $b\in [0,1]$, the $b$-median function $\mathsf{Med}_b$ is nondecreasing and continuous, cf. \cite{Grabisch_et_al_2009}. Hence, given a pair $(x,y)\in [0,1]^2$ we obtain 
\begin{equation*}
\begin{split}
\bigvee_{q\in S_b}\mathsf{Med}_q(x,y)= \bigvee_{q\in S_b}\mathsf{Med}(x,y,q)=\bigvee_{q\in S_b}\mathsf{Med}_x(y,q)=\\
\mathsf{Med}_x\big(y,\bigvee_{q\in S_b}q\big)=
\mathsf{Med}_x(y,b)=\mathsf{Med}_b(x,y).
\end{split}
\end{equation*}

\begin{corollary}
The set $\mathsf{Agg}$ of all aggregation functions can be generated by the countable set consisting of the infinitary operations $\bigvee$ and $\bigwedge$, the functions $\chi_q$, $q\in\mathbb{Q}\cap [0,1]$, defined by $(\ref{e1})$ and the $q$-medians $\mathsf{Med}_q$, $q\in \mathbb{Q}\cap [0,1]$, defined by $(\ref{e2})$. 
\end{corollary}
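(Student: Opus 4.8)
The plan is to derive the corollary from the Theorem by replacing each of its continuum-many generators with a countable composition of the rational-parameter generators. Write $\mathcal{G}$ for the proposed countable set, consisting of $\bigvee$, $\bigwedge$, the functions $\chi_q$ and the $q$-medians $\mathsf{Med}_q$ for $q\in\mathbb{Q}\cap[0,1]$, and write $\mathcal{G}'$ for the generating set furnished by the Theorem, namely $\bigvee$ together with all $\chi_a$, $a\in[0,1]$, and all $\mathsf{Med}_b$, $b\in[0,1[$. Since the Theorem gives $[\mathcal{G}']=\mathsf{Agg}$, the equality $[\mathcal{G}]=\mathsf{Agg}$ will follow from two inclusions. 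The routine one, $[\mathcal{G}]\subseteq\mathsf{Agg}$, holds because every member of $\mathcal{G}$ is monotone and satisfies the boundary conditions, hence is an aggregation function, and $\mathsf{Agg}$ is closed under composition. The substantive inclusion is $\mathsf{Agg}\subseteq[\mathcal{G}]$, and for this it suffices to prove $\mathcal{G}'\subseteq[\mathcal{G}]$; indeed this yields $[\mathcal{G}']\subseteq[\mathcal{G}]$ by minimality of the generated clone.

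First I would dispose of the generators of $\mathcal{G}'$ that already lie in $\mathcal{G}$: the operation $\bigvee$ and the rational-indexed $\chi_q$ and $\mathsf{Med}_q$ belong to $\mathcal{G}$ by definition. For an arbitrary $a\in[0,1]$ I would then use the identity $\chi_a=\bigwedge_{q\in S_a}\chi_q$ verified in the discussion preceding the statement: this exhibits $\chi_a$ as the composition of the infinitary operation $\bigwedge$ with the family $(\chi_q)_{q\in S_a}$ of members of $\mathcal{G}$, so $\chi_a\in[\mathcal{G}]$. Symmetrically, for $b\in[0,1[$ the identity $\mathsf{Med}_b(x,y)=\bigvee_{q\in S_b}\mathsf{Med}_q(x,y)$ presents $\mathsf{Med}_b$ as the composition of $\bigvee$ with the family $(\mathsf{Med}_q)_{q\in S_b}$, whence $\mathsf{Med}_b\in[\mathcal{G}]$. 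This exhausts $\mathcal{G}'$ and completes the reduction.

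Because $S_a$ and $S_b$ are subsets of $\mathbb{Q}$, the auxiliary suprema and infima used in these substitutions are only countable, so they introduce no operation beyond the already admitted $\bigvee$ and $\bigwedge$; the single continuum-sized supremum inherited from the representation \eqref{eqn1} of a general $f\in\mathsf{Agg}$ is all that remains. I expect the only point requiring care to be the clone-theoretic bookkeeping: one must confirm that an infimum or supremum of a countable family of generators is genuinely an element of $[\mathcal{G}]$ in the sense of infinitary composition introduced earlier in the section, not merely a pointwise construction. Once this is granted, the two density-based identities (already checked above) do all the work and no further computation is needed.
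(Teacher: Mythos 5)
Your proposal is correct and follows essentially the same route as the paper: the corollary is obtained from the Theorem precisely via the two density-based identities $\chi_a=\bigwedge_{q\in S_a}\chi_q$ and $\mathsf{Med}_b=\bigvee_{q\in S_b}\mathsf{Med}_q$ established in the paragraph preceding the statement, which replace each continuum-indexed generator by a countable composition of rational-indexed ones. The additional clone-theoretic bookkeeping you supply is a harmless elaboration of what the paper leaves implicit.
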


It is an interesting question, whether $\mathsf{Agg}$ can be generated by a finite set of infinitary aggregation functions.

 As one can see, expression (\ref{eqn1}) involves the operation $\bigvee$ with $\mathfrak{c}$ arguments. Another natural question can be raised: having the generating set of cardinality at most $\mathfrak{c}$, is it possible in general to generate the set $\mathsf{Agg}$ using countable suprema or some operations with countable arguments? In the sequel, we show that for $\mathsf{Agg}^1$ it is the case, while for $\mathsf{Agg}$ not.  

For a unary aggregation function $f$, denote by $$D(f)=\big\{c\in]0,1[: \bigvee_{x<c}f(x)<f(c)\big\}.$$

\begin{lemma}
Let $f\colon [0,1]\to [0,1]$ be an aggregation function. The set $D(f)$ is at most countable.
\end{lemma}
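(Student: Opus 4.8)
The plan is to realize \(D(f)\) as an index set for a family of pairwise disjoint nonempty open subintervals of \([0,1]\), and then to inject that family into \(\mathbb{Q}\) by density; since \(\mathbb{Q}\) is countable, this forces \(D(f)\) to be at most countable. First I would note that because \(f\) is nondecreasing, the supremum \(\bigvee_{x<c}f(x)\) is simply the left limit of \(f\) at \(c\), so that \(D(f)\) is exactly the set of points at which \(f\) exhibits a strictly positive jump from the left. This is the classical observation that a monotone function has only countably many discontinuities, here specialized to left jumps.

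Concretely, to each \(c\in D(f)\) I would associate the open interval
\[ I_c=\Big(\,\bigvee_{x<c}f(x),\ f(c)\,\Big)\subseteq[0,1], \]
which is nonempty precisely by the defining inequality \(\bigvee_{x<c}f(x)<f(c)\) of membership in \(D(f)\). The decisive step is to verify that the intervals \(\{I_c:c\in D(f)\}\) are pairwise disjoint. Suppose \(c,c'\in D(f)\) with \(c<c'\). Since \(c\) is itself one of the points \(x<c'\), monotonicity of \(f\) gives \(f(c)\le\bigvee_{x<c'}f(x)\); that is, the right endpoint of \(I_c\) does not exceed the left endpoint of \(I_{c'}\), whence \(I_c\cap I_{c'}=\emptyset\).

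Finally, using the density of the rationals I would choose, for every \(c\in D(f)\), a rational \(q_c\in I_c\). The map \(c\mapsto q_c\) is injective, because distinct intervals \(I_c\) are disjoint and hence contain disjoint sets of rationals, so it embeds \(D(f)\) into \(\mathbb{Q}\cap[0,1]\); countability of \(\mathbb{Q}\) then yields the claim. I expect no serious difficulty: the only point requiring a little care is the disjointness verification, which rests entirely on correctly reading \(\bigvee_{x<c}f(x)\) as a left limit and then using monotonicity to compare the value \(f(c)\) with the left limit at the larger argument \(c'\).
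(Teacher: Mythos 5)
Your proof is correct and follows exactly the paper's argument: each $c\in D(f)$ determines the nonempty open interval $\bigl]\bigvee_{x<c}f(x),f(c)\bigr[$, these intervals are pairwise disjoint by monotonicity, and choosing a rational in each injects $D(f)$ into $\mathbb{Q}$. You merely spell out the disjointness step that the paper dismisses as evident, which is a welcome addition but not a different route.
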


\begin{proof}
Any element $c\in D(f)$ determines a non-empty open interval $]\bigvee_{x<c}f(x),f(c)[$. Evidently, different elements determine pairwise disjoint intervals of this type. Since any such interval contains a rational number, it follows that $D(f)$ is at most countable. 
\end{proof}

\begin{lemma}\label{lemn3}
Let $f\colon [0,1]\to [0,1]$ be a unary aggregation function. Then for all $x\in[0,1]$
$$ f(x)=\bigvee_{q\in\mathbb{Q}\cap]0,1[} h^f_q(x) \vee \bigvee_{c\in D(f)}h^f_c(x).$$
\end{lemma}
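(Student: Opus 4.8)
The plan is to recognize that the right-hand side is a single supremum over the countable index set $Q:=(\mathbb{Q}\cap\,]0,1[)\cup D(f)$, namely $\bigvee_{a\in Q}h^f_a(x)$, and to show that this equals $f(x)$ by sandwiching. Since $Q\subseteq\,]0,1[\,=[0,1]^1_{*}$, Lemma \ref{lem2} applied in the unary case gives at once $\bigvee_{a\in Q}h^f_a(x)\leq\bigvee_{a\in\,]0,1[}h^f_a(x)=f(x)$, so only the reverse inequality requires work.

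For the reverse inequality I would first dispose of the boundary. If $x=1$ then $h^f_a(1)=1$ for every $a$ by Lemma \ref{lem1}, while if $x=0$ then $h^f_a(0)=0$ for every $a\in\,]0,1[$; in both cases the right-hand side equals $f(x)$, the index set $Q$ being nonempty. So fix $x\in\,]0,1[$. Evaluating $h^f_a(x)$ via Lemma \ref{lem1} (using $x\neq 1$) gives $h^f_a(x)=f(a)$ when $a\leq x$ and $h^f_a(x)=0$ when $a\nleq x$; since the values of $f$ are nonnegative, this yields $\bigvee_{a\in Q}h^f_a(x)=\bigvee_{a\in Q,\,a\leq x}f(a)$. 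It thus remains to prove $\bigvee_{a\in Q,\,a\leq x}f(a)=f(x)$.

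Here I would split according to whether $x$ is a point of left-discontinuity. If $x\in D(f)$, then $x\in Q$ and $x\leq x$, so $f(x)$ itself occurs in the supremum and the claim is immediate. If instead $x\in\,]0,1[\setminus D(f)$, then the defining inequality of $D(f)$ fails at $x$, which together with the monotonicity of $f$ forces $\bigvee_{t<x}f(t)=f(x)$. I then approximate from below by rationals: for each $t<x$ the density of $\mathbb{Q}$ produces a rational $q$ with $t<q<x$, so that $q\in\mathbb{Q}\cap\,]0,1[\subseteq Q$, $q\leq x$, and $f(q)\geq f(t)$ by monotonicity. Taking the supremum over all $t<x$ gives $\bigvee_{a\in Q,\,a\leq x}f(a)\geq\bigvee_{t<x}f(t)=f(x)$, the desired inequality.

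The conceptual heart, and the step I expect to be the only genuine obstacle, is the case distinction on $D(f)$: the density argument recovers $f(x)$ exactly at those $x$ where $f$ agrees with the supremum of its strictly smaller values, but it necessarily breaks down at a left-discontinuity, where no rational strictly below $x$ can push $f(q)$ up to the jump value $f(x)$. Adjoining the set $D(f)$ of such jump points to the index set is precisely what repairs this, and checking that these two mechanisms jointly cover every $x\in\,]0,1[$ is the key point; that $D(f)$ is countable, so that the whole generating family stays countable, has already been secured in the preceding lemma.
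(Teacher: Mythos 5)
Your proof is correct and follows essentially the same route as the paper: the value $f(x)$ is picked up directly from the supremum when $x$ lies in the index set (in particular when $x\in D(f)$), and otherwise the density of $\mathbb{Q}$ together with $\bigvee_{t<x}f(t)=f(x)$ for $x\notin D(f)$ supplies the reverse inequality. Your version merely spells out the boundary cases and the reduction via Lemma \ref{lem1} that the paper compresses into ``the same as in Lemma \ref{lem2}.''
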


\begin{proof}
For $x\in (\mathbb{Q}\cap[0,1])\cup D(f)$ the proof of the assertion is the same as in Lemma \ref{lem2}. Hence, assume that $x$ is irrational with $x\notin D(f)$. Since $\mathbb{Q}$ is a dense subset of $\mathbb{R}$, it follows that 
$$\bigvee_{\substack{q\in \mathbb{Q}\cap]0,1[\\ q<x}} f(q)\;\vee\; \bigvee_{\substack{c\in D(f)\\ c<x}} f(c)=\bigvee_{\substack{c\in ]0,1[\\ c<x}} f(c) = f(x),$$ completing the proof. 
\end{proof}

Recall that $\mathfrak{c}^{\aleph_0}=(2^{\aleph_0})^{\aleph_0}=2^{\aleph_0\cdot \aleph_0}=2^{\aleph_0}=\mathfrak{c}$.

\begin{lemma}\label{lemn1}
$\left|\mathsf{Agg}\right|=2^\mathfrak{c}.$
\end{lemma}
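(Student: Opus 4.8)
The plan is to prove the equality by bounding $\left|\mathsf{Agg}\right|$ from above and below by $2^{\mathfrak{c}}$ separately, and then invoking the Cantor--Schr\"oder--Bernstein theorem. The upper bound is pure cardinal arithmetic; the substance of the argument lies in the lower bound, where I would exhibit $2^{\mathfrak{c}}$ pairwise distinct \emph{binary} aggregation functions.

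For the upper bound I would observe that every $f\in\mathsf{Agg}^n$ is in particular a map $[0,1]^n\to[0,1]$, so $\mathsf{Agg}^n$ embeds into the full function space $[0,1]^{[0,1]^n}$. Since $\left|[0,1]^n\right|=\mathfrak{c}$ for each finite $n\geq 1$ (using $\mathfrak{c}\cdot\mathfrak{c}=\mathfrak{c}$), this space has cardinality $\mathfrak{c}^{\mathfrak{c}}=(2^{\aleph_0})^{\mathfrak{c}}=2^{\aleph_0\cdot\mathfrak{c}}=2^{\mathfrak{c}}$. Hence $\left|\mathsf{Agg}^n\right|\leq 2^{\mathfrak{c}}$ for every $n$, and because $\mathsf{Agg}=\bigcup_{n\in\mathbb{N}}\mathsf{Agg}^n$ is a countable union, $\left|\mathsf{Agg}\right|\leq \aleph_0\cdot 2^{\mathfrak{c}}=2^{\mathfrak{c}}$.

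For the lower bound the key tool is a large antichain in the product order on $[0,1]^2$: I would take the anti-diagonal $A=\{(t,1-t):t\in\,]0,1[\,\}$, which has cardinality $\mathfrak{c}$, consists of pairwise incomparable points, and avoids $(0,0)$ and $(1,1)$. To each nonempty $B\subseteq A$ I associate its up-set $U_B=\{\mathbf{x}\in[0,1]^2:\mathbf{x}\geq\mathbf{a}\ \text{for some}\ \mathbf{a}\in B\}$ and set $f_B=\mathbf{1}_{U_B}$. As the indicator of an upward-closed set, $f_B$ is nondecreasing in each coordinate; moreover $f_B(1,1)=1$ and $f_B(0,0)=0$ (since no element of $A$ equals $(0,0)$), so $f_B\in\mathsf{Agg}^2$.

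The crucial step is injectivity of $B\mapsto f_B$, and this is where the antichain property does the work. If $\mathbf{a}\in B\setminus B'$, then $f_B(\mathbf{a})=1$, whereas $f_{B'}(\mathbf{a})=1$ would require $\mathbf{a}\geq\mathbf{b}$ for some $\mathbf{b}\in B'\subseteq A$; since $A$ is an antichain this forces $\mathbf{b}=\mathbf{a}$, contradicting $\mathbf{a}\notin B'$, so $f_{B'}(\mathbf{a})=0$. Thus distinct nonempty subsets of $A$ give distinct aggregation functions, yielding $\left|\mathsf{Agg}^2\right|\geq 2^{\left|A\right|}=2^{\mathfrak{c}}$ and hence $\left|\mathsf{Agg}\right|\geq 2^{\mathfrak{c}}$. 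Combining the two bounds gives the claim. I expect the antichain injectivity argument to be the only genuinely delicate point, the rest reducing to the routine identities $\mathfrak{c}\cdot\mathfrak{c}=\aleph_0\cdot\mathfrak{c}=\mathfrak{c}$.
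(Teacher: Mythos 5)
Your proof is correct and takes essentially the same approach as the paper: the upper bound is the identical cardinal arithmetic, and the lower bound likewise exploits the anti-diagonal $x+y=1$ in $[0,1]^2$ as a $\mathfrak{c}$-sized antichain --- the paper injects all of $[0,1]^{[0,1]}$ via $g_\varphi(x,y)$ equal to $0$, $\varphi(x)$, or $1$ according as $x+y<1$, $x+y=1$, or $x+y>1$, whereas you take $\{0,1\}$-valued indicators of up-sets of subsets of that antichain. Both constructions produce $2^{\mathfrak{c}}$ distinct binary aggregation functions and finish with Cantor--Bernstein, so the difference is only in the encoding.
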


\begin{proof}

Obviously, $\left|[0,1]^n\right|=\mathfrak{c}$ for all $n\in\mathbb{N}$. Hence for each $n\in\mathbb{N}$, $\left|\mathsf{Agg}^n\right|\leq \mathfrak{c}^{\mathfrak{c}}=2^\mathfrak{c}$, which represents the cardinality of all functions from $[0,1]^n$ into $[0,1]$. Consequently, $\left|\mathsf{Agg}\right|= \left| \bigcup_{n=1}^{\infty}\mathsf{Agg}^n \right|\leq \aleph_0 \cdot 2^\mathfrak{c}=2^{\mathfrak{c}}$.

Conversely, let $\varphi\colon [0,1]\to [0,1]$ be an arbitrary function. Define $g_{\varphi}\colon [0,1]^2\to [0,1]$ by 
$$
g_{\varphi}(x,y)=
\begin{cases} 
&0, \mbox{ if }x+y<1; \\
&\varphi(x), \mbox{ if }x+y=1;\\
&1, \mbox{ if }x+y>1. 
\end{cases} 
$$  
Evidently $g_{\varphi}$ is a binary aggregation function for each $\varphi\colon [0,1]\to [0,1]$ and the correspondence $\varphi\mapsto g_{\varphi}$ establishes an injection between the set of the cardinality $2^{\mathfrak{c}}$ and $\mathsf{Agg}$. Consequently, Cantor-Bernstein theorem yields the equality. 
\end{proof}

Let $\omega$ be the first infinite ordinal and $\omega_1$ be the first uncountable ordinal.
We assume the axiom of choice. In this case $\omega_1$ is regular, implying $\sup_{i\in I} \alpha_i<\omega_1$, whenever $I$ is countable and $\alpha_i<\omega_1$ for all $i\in I$, cf. \cite{Jech}.
 
Denote by $\mathsf{Agg}^{\omega}$ the set of all aggregation functions $f\colon [0,1]^{\omega}\to [0,1]$ and we put $\mathsf{Agg}^{\sigma}=\mathsf{Agg}\cup \mathsf{Agg}^{\omega}$. 

Let $0<\alpha,\beta\leq\omega$ be ordinals. For an $\alpha$-ary function $f\colon [0,1]^{\alpha}\to[0,1]$ and a system $(g_i:i<\alpha)$ of $\beta$-ary functions the composition of $f$ on the system $(g_i:i<\alpha)$ of functions will be denoted by $f\circ (g_i:i<\alpha)$. 
If $\alpha=n$ is finite, then 
$$ f\circ (g_i:i<n)=f(g_0,\dots,g_{n-1})$$ determines the classical composition.

In the sequel, we will refer to a set of functions involving at most countably many arguments, which is closed under the composition, as a $\sigma$-clone. 
Given a set $S\subseteq \mathsf{Agg}^{\sigma}$ of aggregation functions, $0<\beta\leq \omega$, the symbol $S/_{\beta}$ will denote all aggregation functions with the domain $[0,1]^{\beta}$. For $S\subseteq \mathsf{Agg}^{\sigma}$ and an $\alpha$-ary aggregation function $f$, $\alpha\leq \omega$, we put 
$$C_f(S)=\{f\circ(g_i:i<\alpha): (\exists \beta\leq\omega)(\forall i<\alpha)g_i\in S/_{\beta}\}.$$ If $F\subseteq\mathsf{Agg}^{\sigma}$ is a set of aggregation functions, we define $C_F(S)=\bigcup_{f\in F}C_f(S)$.

\begin{theorem}\label{thmn1}
Let $F\subseteq \mathsf{Agg}^{\sigma}$ be a set of cardinality at most $\mathfrak{c}$. Then the $\sigma$-clone $C$ generated by the set $F$ has also cardinality at most $\mathfrak{c}$.
\end{theorem}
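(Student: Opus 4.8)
The plan is to realize the $\sigma$-clone $C$ as the union of an increasing transfinite chain of sets obtained by iterating the one-step composition operator, to bound the cardinality added at each step by $\mathfrak{c}$, and then — using the regularity of $\omega_1$ recalled just before the theorem — to show that the chain already closes after $\omega_1$ steps. Concretely, I would set $S_0=F\cup P$, where $P$ is the set of all projections $p_i^\beta$ with $\beta\leq\omega$ and $i<\beta$. Since there are countably many admissible arities $\beta$ and countably many indices for each, $|P|\leq\aleph_0$, so $|S_0|\leq\mathfrak{c}+\aleph_0=\mathfrak{c}$. I would then define $S_{\xi+1}=S_\xi\cup C_{S_\xi}(S_\xi)$ for successor stages and $S_\lambda=\bigcup_{\xi<\lambda}S_\xi$ at limit stages.

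The key cardinality estimate is that $|S|\leq\mathfrak{c}$ implies $|C_S(S)|\leq\mathfrak{c}$. Indeed, a single element of $C_S(S)$ is determined by an outer function $f\in S$ (at most $\mathfrak{c}$ choices), an arity $\beta\leq\omega$ for the inner functions (countably many choices), and a system $(g_i:i<\alpha)$ with $g_i\in S/_\beta$, where $\alpha\leq\omega$ is the arity of $f$; the number of such systems is at most $|S/_\beta|^{|\alpha|}\leq\mathfrak{c}^{\aleph_0}=\mathfrak{c}$ by the cardinal identity recalled before Lemma \ref{lemn1}. Multiplying gives $|C_S(S)|\leq\mathfrak{c}\cdot\aleph_0\cdot\mathfrak{c}=\mathfrak{c}$. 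A transfinite induction then yields $|S_\xi|\leq\mathfrak{c}$ for every $\xi<\omega_1$: the successor step uses this estimate together with $|S_\xi|+\mathfrak{c}=\mathfrak{c}$, while a limit $\lambda<\omega_1$ is a union of countably many sets each of size at most $\mathfrak{c}$, hence of size at most $\aleph_0\cdot\mathfrak{c}=\mathfrak{c}$.

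The crux — and the step I expect to be the main obstacle — is showing that $S_{\omega_1}$ is already closed under composition, so that $C\subseteq S_{\omega_1}$. Here the point is that composition involves only countably many functions, matching the countable additivity built into the regularity of $\omega_1$. Taking any composition $f\circ(g_i:i<\alpha)$ with $f$ and all $g_i$ lying in $S_{\omega_1}=\bigcup_{\xi<\omega_1}S_\xi$, each of these at most countably many functions appears at some stage $\xi<\omega_1$; since $\omega_1$ is regular, the supremum $\xi^*$ of these countably many ordinals again satisfies $\xi^*<\omega_1$, so $f$ and every $g_i$ already belong to $S_{\xi^*}$ and the composition lies in $S_{\xi^*+1}\subseteq S_{\omega_1}$. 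Thus $S_{\omega_1}$ is a $\sigma$-clone containing $F$, whence $C\subseteq S_{\omega_1}$. Finally, $S_{\omega_1}$ is a union of $\aleph_1\leq\mathfrak{c}$ sets each of cardinality at most $\mathfrak{c}$, so $|C|\leq|S_{\omega_1}|\leq\aleph_1\cdot\mathfrak{c}=\mathfrak{c}$, completing the argument.
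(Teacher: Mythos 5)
Your proposal is correct and follows essentially the same route as the paper: an increasing transfinite chain of length $\omega_1$, the bound $|S|^{\aleph_0}\le\mathfrak{c}^{\aleph_0}=\mathfrak{c}$ at each successor stage, and the regularity of $\omega_1$ to show the union is already closed under countable-arity composition. The only (harmless, arguably cleaner) deviation is that you take $S_{\xi+1}=S_\xi\cup C_{S_\xi}(S_\xi)$ rather than the paper's $S_{\xi+1}=C_F(S_\xi)$, which makes the monotonicity of the chain and the closure step immediate without changing the cardinality estimate.
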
  

\begin{proof}
We proceed by the transfinite recursion up to $\omega_1$ as follows: We put
\begin{itemize}
\item $S_0=\bigcup_{0<\alpha\leq\omega}\{p^{\alpha}_i: i<\alpha\}$, where for each $0<\alpha\leq\omega$, $p^{\alpha}_i\colon [0,1]^{\alpha}\to [0,1]$ denotes the $i$-th projection.
\item $S_{\xi+1}=C_F(S_{\xi})$ for all $\xi<\omega_1$.
\item $S_{\xi}=\bigcup_{\lambda<\xi}S_{\lambda}$ for all limit $\xi<\omega_1$.
\end{itemize}
It can be easily seen that $S_{\lambda}\subseteq S_{\xi}$, provided $\lambda<\xi$. Further, we put $C=\bigcup_{\xi<\omega_1}S_{\xi}$. We show that $C$ forms a $\sigma$-clone generated by the set $F$ and $\left|C\right|=\mathfrak{c}$. Obviously, $C$ contains all projections, while $F\subseteq S_1$. In order to show that $C$ is closed under compositions, assume that $f\in C$ is an $\alpha$-ary function and $(g_i:i<\alpha)\in {C/_{\beta}}$ is a system of $\beta$-ary functions, where $0<\alpha,\beta\leq\omega$. Let $\gamma<\omega_1$ be an ordinal such that $f\in S_{\gamma}$, while for all $i< \alpha$, $\gamma_i$ denotes an ordinal such that $g_i\in S_{\gamma_i}$. Since $\alpha$ is at most countable, it follows that $\delta=\sup_{i<\alpha}\gamma_i<\omega_1$. As $g_i\in S_{\delta}$ for all $i<\alpha$, we obtain $f\circ(g_i:i<\alpha)\in S_{\delta+\gamma}$, showing that $C$ is closed under compositions.

Further, using the transfinite induction, we show that $\left|S_{\xi}\right|\leq \mathfrak{c}$ for all $\xi<\omega_1$. Obviously $\left|S_0\right|=\aleph_0 < \mathfrak{c}$. Assume, that $\left|S_{\lambda}\right|\leq\mathfrak{c}$ for all $\lambda<\xi$. We show $\left|S_{\xi}\right|\leq\mathfrak{c}$ as well. If $\xi=\lambda+1$ for some $\lambda<\omega_1$, then $S_{\xi}=C_F(S_{\lambda})=\bigcup_{f\in F}C_f(S_{\lambda})$. Given an arbitrary $\alpha$-ary function $f\in F$ we obtain $\left|C_f(S_{\lambda})\right|\leq \left|S_{\lambda}\right|^{\left|\alpha\right|}\leq \mathfrak{c}^{\aleph_0}=\mathfrak{c}$. Consequently, $\left|S_{\xi}\right|=\left|C_F(S_{\lambda})\right|\leq\left|F\right|\cdot\mathfrak{c}\leq\mathfrak{c}\cdot\mathfrak{c}=\mathfrak{c}$. If $\xi$ is limit, then $\xi$ is countable and $\left|S_{\xi}\right|\leq \aleph_0\cdot\mathfrak{c}=\mathfrak{c}$.

Finally, since $\aleph_1\leq \mathfrak{c}$, we obtain $\left|C\right|\leq \aleph_1\cdot \mathfrak{c}=\mathfrak{c}$.
\end{proof}

Since $\mathfrak{c}<2^{\mathfrak{c}}=\left|\mathsf{Agg}\right|$, we obtain the following corollary.

\begin{corollary}
The set of all aggregation functions $\mathsf{Agg}$ cannot be generated as a $\sigma$-clone by any set of countable operations with cardinality at most $\mathfrak{c}$.
\end{corollary}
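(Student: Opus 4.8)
The plan is to derive this statement as an immediate cardinality consequence of the two preceding results, arguing by contradiction. Suppose there exists a set $F$ of countable operations, i.e. $F\subseteq\mathsf{Agg}^{\sigma}$, with $\left|F\right|\leq\mathfrak{c}$ whose generated $\sigma$-clone $C$ contains $\mathsf{Agg}$. The entire argument then rests on comparing the cardinality of $C$ with that of $\mathsf{Agg}$.

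First I would invoke Theorem \ref{thmn1} to bound the size of the generated object: since $\left|F\right|\leq\mathfrak{c}$, the $\sigma$-clone $C$ generated by $F$ satisfies $\left|C\right|\leq\mathfrak{c}$. Because generating $\mathsf{Agg}$ as a $\sigma$-clone means precisely that $\mathsf{Agg}\subseteq C$, it follows that $\left|\mathsf{Agg}\right|\leq\left|C\right|\leq\mathfrak{c}$.

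Next I would bring in Lemma \ref{lemn1}, which computes $\left|\mathsf{Agg}\right|=2^{\mathfrak{c}}$. Combining this with the previous bound yields $2^{\mathfrak{c}}\leq\mathfrak{c}$, contradicting Cantor's theorem $\mathfrak{c}<2^{\mathfrak{c}}$. This contradiction establishes the claim.

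As for obstacles, there is essentially no technical difficulty here, since both the cardinality bound on generated $\sigma$-clones and the value $\left|\mathsf{Agg}\right|=2^{\mathfrak{c}}$ are already in hand; the corollary is a pure counting argument. The only point deserving a moment of care is the harmless fact that the generated $\sigma$-clone $C$ also contains operations of countable arity, so that $C$ may strictly exceed $\mathsf{Agg}$. What matters is only the inclusion $\mathsf{Agg}\subseteq C$, which is exactly the meaning of the phrase ``$F$ generates $\mathsf{Agg}$'', and this inclusion is all the cardinality comparison requires.
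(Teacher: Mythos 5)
Your proof is correct and is exactly the paper's argument: the paper states the corollary immediately after Theorem \ref{thmn1} with the one-line observation that $\mathfrak{c}<2^{\mathfrak{c}}=\left|\mathsf{Agg}\right|$, which is precisely the cardinality contradiction you spell out. Nothing is missing.
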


\begin{corollary}
$\left|\mathsf{Agg}^1\right|=\mathfrak{c}$.
\end{corollary}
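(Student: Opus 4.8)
The plan is to prove the two inequalities $\left|\mathsf{Agg}^1\right|\le\mathfrak{c}$ and $\left|\mathsf{Agg}^1\right|\ge\mathfrak{c}$ and then conclude by the Cantor--Bernstein theorem, exactly as in the proof of Lemma \ref{lemn1}. The lower bound is immediate: the characteristic functions $\chi_a$, $a\in\,]0,1]$, from \eqref{e1} are pairwise distinct, since for $a<a'$ and any $x$ with $a\le x<a'$ one has $\chi_a(x)=1\ne 0=\chi_{a'}(x)$. Thus $a\mapsto\chi_a$ embeds the continuum-sized set $]0,1]$ into $\mathsf{Agg}^1$, giving $\left|\mathsf{Agg}^1\right|\ge\mathfrak{c}$. (The power functions $x\mapsto x^t$, $t\in\,]0,\infty[$, would serve equally well.)

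The substance is the upper bound, and here I would rely entirely on the representation in Lemma \ref{lemn3}. In the unary case each auxiliary function $h^f_a$ is, by \eqref{e3} and Lemma \ref{lem1}, completely determined by the single number $f(a)$; hence Lemma \ref{lemn3} shows that $f$ is recovered from its values on the countable set $E_f:=(\mathbb{Q}\cap\,]0,1[)\cup D(f)$. I would record this by the assignment sending $f$ to the pair consisting of $D(f)$ together with the restriction $f|_{E_f}$, equivalently to the graph $\{(x,f(x)):x\in E_f\}$, a countable subset of $[0,1]^2$. By the lemma preceding Lemma \ref{lemn3} the set $D(f)$ is at most countable, so this really is a countable set of data, and the assignment is injective because, once $D(f)$ and the values $f(q)$ ($q$ rational) and $f(c)$ ($c\in D(f)$) are fixed, all the functions $h^f_q$ and $h^f_c$ on the right-hand side of Lemma \ref{lemn3} are pinned down, so $f$ itself is.

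It then remains to count the target. Using the identity $\mathfrak{c}^{\aleph_0}=\mathfrak{c}$ recalled just before Lemma \ref{lemn1}, the collection of countable subsets of the continuum-sized set $[0,1]^2$ has cardinality $\mathfrak{c}^{\aleph_0}=\mathfrak{c}$, whence $\left|\mathsf{Agg}^1\right|\le\mathfrak{c}$. I expect this counting step to be the only real obstacle: because $D(f)$ depends on $f$, the injection does not land in a fixed power such as $[0,1]^{\mathbb{Q}}$, so one cannot simply invoke $\left|[0,1]^{\mathbb{Q}}\right|=\mathfrak{c}$; instead one must count all countable subsets of $[0,1]^2$, and it is precisely there that $\mathfrak{c}^{\aleph_0}=\mathfrak{c}$ does the work. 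Everything else --- injectivity, the reduction of each $h^f_a$ to the value $f(a)$, and the lower bound --- is routine given Lemma \ref{lemn3} and the countability of $D(f)$.
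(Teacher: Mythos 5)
Your proof is correct, and for the upper bound it takes a genuinely different route from the paper. The paper's own proof is two lines: the lower bound via the family $\{\chi_a\}$ (exactly as you do it), and the upper bound by observing that Lemma \ref{lemn3} places every unary aggregation function inside the $\sigma$-clone generated by a set of cardinality $\mathfrak{c}$, so that Theorem \ref{thmn1} (the transfinite-recursion bound $|C|\le\mathfrak{c}$ on such $\sigma$-clones) immediately gives $|\mathsf{Agg}^1|\le\mathfrak{c}$. You instead bypass Theorem \ref{thmn1} entirely and encode $f$ injectively by the countable data $\{(x,f(x)):x\in(\mathbb{Q}\cap\,]0,1[)\cup D(f)\}$, then count countable subsets of $[0,1]^2$ via $\mathfrak{c}^{\aleph_0}=\mathfrak{c}$. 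Both arguments rest on the same two ingredients (countability of $D(f)$ and the reconstruction formula of Lemma \ref{lemn3}); yours is more elementary and self-contained, essentially rediscovering the classical fact that a monotone function is determined by its values on the rationals together with its countably many points of left-discontinuity, while the paper's version is shorter on the page because it amortizes the counting into the already-proved clone cardinality theorem. Your remark that the image does not land in a fixed power $[0,1]^{\mathbb{Q}}$ and that one must therefore count arbitrary countable subsets is exactly the right point of care; the identity $\mathfrak{c}^{\aleph_0}=\mathfrak{c}$ (recalled in the paper before Lemma \ref{lemn1}) handles it.
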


\begin{proof}
The set $\{\chi_a:a\in[0,1]\}$ represents a subset of $\mathsf{Agg^1}$ of cardinality $\mathfrak{c}$. Moreover, Theorem \ref{thmn1} and Lemma \ref{lemn3} yield $\left|\mathsf{Agg}^1\right|\leq \mathfrak{c}$.
\end{proof}

\section{Conclusion}

We have introduced a generating set of functions generating the class of $\mathsf{Agg}$ of all aggregation functions on $[0,1]$. This generating set consists of suprema
$\bigvee$ acting on input sets with cardinality at most $\mathfrak{c}$, binary aggregation functions $\wedge$ ($\min$) and $\mathsf{Med}_b$, $b \in [0,1]$, and unary aggregation functions
$1_{]0,1]}$ and $1_{[a,1]}$, $a \in ]0,1]$. Observe that an $n$-ary function $f$ is from $\mathsf{Agg}^n$ if and only if its dual $f^d\colon[0,1]^n\to\mathbb{R}$ given by $f^d(x_0,\dots,x_{n-1}) = 1 - f(1-x_0,\dots,1-x_{n-1})$ is from $\mathsf{Agg}^n$, i.e., also the class $\mathsf{Agg}$ is closed under duality (of aggregation functions). This fact allows to introduce a dual generating set of functions generating the class $\mathsf{Agg}$, consisting of infima $\bigwedge$ acting on input sets with cardinality at most $\mathfrak{c}$, binary aggregation functions
$\vee$ ($\max$) and $\mathsf{Med}_b$, $b \in[0,1]$, and unary aggregation functions $1_{\{1\}}$ and
$1_{]a,1]}$, $a \in [0,1[$. Moreover, we have shown the minimality (with respect to the cardinality of input sets for suprema) of the introduced generating set. 
Particular aggregation functions are fuzzy unions (disjunctions) and fuzzy intersections (conjunctions). Obviously, they have the same generating sets as the class $\mathsf{Agg}$. On the other hand, fuzzy implications can be obtained from binary aggregation functions possessing $0$ as the annihilator (zero element) by means of some strong negation on $[0,1]$, e.g. by means of Zadeh's negation $N\colon[0,1]\to [0,1], N(x)=1-x$. Hence, considering the generating set for aggregation functions and the function $N$, we obtain a generating set for fuzzy implications.

We believe that our results will
help to better understanding of the algebraic structure of aggregation functions (fuzzy connectives), as
well as they will be helpful in constructing aggregation functions (fuzzy connectives) with values known
in some fixed points.

\section*{Acknowledgment}

The first author was supported by the international project Austrian Science Fund (FWF)-Grant Agency of the Czech Republic (GA\v{C}R) number I 1923-N25; the second author by the Slovak Research and Development Agency under contract APVV-14-0013 and by the European Regional Development Fund in the IT4Innovations Centre of Excellence project reg. no. CZ.1.05/1.1.00/02.0070; the third author by the ESF Fund CZ.1.07/2.3.00/30.0041, by the Development project of the faculty of Science Palack\'y University Olomouc and by the Slovak VEGA Grant 2/0044/16.
\ifCLASSOPTIONcaptionsoff
  \newpage
\fi

\end{document}